\renewcommand{\email}[2][]{%
  \ifx\emails\@empty\relax\else{\g@addto@macro\emails{,\space}}\fi%
  \@ifnotempty{#1}{\g@addto@macro\emails{\textrm{(#1)}\space}}%
  \g@addto@macro\emails{#2}%
}
\def\row#1/#2!{#1_{\IfStrEq{#2}{}{n}{#2}} & \dynkin{#1}{#2}\\}
\newcommand{\stt}{\mathfrak{s}_{T, \mathfrak{t}}}
\newcommand{\svast}{\bBigg@{3}}
\newcommand{\vast}{\bBigg@{4}}
\newcommand{\Vast}{\bBigg@{5}}
\newcommand{\wc}{[ T \cup -T]}
\newcommand{\cw}{[T \cup -T ]}
  \newcommand{\ssimple}{\mathfrak{g}}
  \theoremstyle{definition}
  \newtheorem{definition}{Definition}[section]
   \theoremstyle{plain}
  \newtheorem{lemma}[definition]{Lemma}
  \newtheorem{proposition}[definition]{Proposition}
  \newtheorem{theorem}[definition]{Theorem}
  \newtheorem{corollary}[definition]{Corollary}
    \theoremstyle{definition}
\title[Regular extreme semisimple Lie algebras]{Regular extreme semisimple Lie algebras}
\begin{document}

\author[Andrew Douglas]{Andrew Douglas$^{1,2}$}
\address[]{$^1$Department of Mathematics, New York City College of Technology, City University of New York, Brooklyn, NY, USA.}
\address[]{$^2$Ph.D. Programs in Mathematics and Physics, CUNY Graduate Center, City University of New York, New York, NY, USA.}

\author[Joe Repka]{Joe Repka$^3$}
\address{$^3$Department of Mathematics, University of Toronto, Toronto, ON,  Canada.}

\date{\today}

\keywords{Narrow subalgebras, wide subalgebras, regular  extreme semisimple Lie algebras, Dynkin diagrams.} 
\subjclass[2010]{17B05, 17B10, 17B20, 17B22,  17B30}

\begin{abstract}
A subalgebra  of a semisimple Lie algebra  is {\it wide} if every simple module of the semisimple Lie algebra remains indecomposable when restricted to the subalgebra. 
A subalgebra  is {\it narrow} if the restrictions of all non-trivial simple modules  to the subalgebra have  proper decompositions.
A semisimple Lie algebra is {\it regular extreme} if any regular 
subalgebra of the semisimple Lie algebra is either narrow or wide. 
Douglas and Repka \cite{dr24} previously showed that  the simple Lie  algebras of type $A_n$ are regular extreme. In this article, we show that, in fact, all simple Lie algebras are 
 regular extreme. 
Finally, we show that no non-simple, semisimple Lie algebra is regular extreme. 
\end{abstract}

\maketitle

\section{Introduction}

A subalgebra  of a semisimple Lie algebra  is {\it wide} if every simple module of the semisimple Lie algebra remains indecomposable when restricted to the subalgebra. 
The term ``wide" was coined by Panyushev  in \cite{panyu}, where he established conditions for large families of subalgebras of semisimple Lie algebras to be wide. 
Douglas and Repka extended this work, in part, by examining conditions that make an arbitrary regular subalgebra of a semisimple Lie algebra wide \cite{dr24}. 

In \cite{dr24}, Douglas and Repka also initiated the study of narrow subalgebras, that is, subalgebras on which the restriction of any non-trivial simple module of the ambient 
 semisimple Lie algebra has a proper decomposition.
In this article, we investigate regular extreme semisimple Lie algebras.  A semisimple Lie algebra is {\it regular extreme} if any regular 
subalgebra of the semisimple Lie algebra is either narrow or wide. 

In \cite{dr24}, Douglas and Repka
 showed that the simple Lie algebras of type $A_n$ are regular extreme using the bases of  $A_n$-type modules 
 created by Feigin, Fourier, and Littelmann \cite{feigin}. This property does not hold for non-regular subalgebras of $A_n$ \cite{dr1}; that is, a non-regular subalgebra
 of $A_n$  may be neither narrow nor wide.  
 
  In the present article, we extend the examination of regular extreme semisimple Lie algebras. We show that all
   simple Lie algebras are regular extreme. Specifically, we establish that the simple Lie algebras of types $B_n$, $C_n$, $D_n$,  $E_6$, $E_7$, $E_8$, $F_4$, and  $G_2$ are regular extreme.
   Then, we prove that  no non-simple, semisimple Lie algebra is regular extreme. In this paper, our proofs do not rely on  specific bases of simple modules, even though this was the approach used 
 with $A_n$ in \cite{dr24}.

The article is organized as follows. Section \ref{background} contains necessary background information. 
In Section \ref{extremesection}, we prove that all simple Lie algebras are regular extreme. 
Then, in Section \ref{nos}, we establish that no non-simple, semisimple Lie algebra is regular extreme.

Note that all Lie algebras and modules in this article are over the complex numbers, and  finite-dimensional.

\section{Background, terminology, and notation}\label{background}

In this section, we review  background on semsimple Lie algebras and their modules; closed subsets of root systems and regular subalgebras; simple Lie algebras and Dynkin diagrams; 
and relevant previous results from \cite{dr24, panyu}.

\subsection{Semisimple Lie algebras}

Let  $\mathfrak{g}$ denote a semisimple Lie algebra, and $\mathfrak{h}$ a fixed Cartan subalgebra of $\mathfrak{g}$. The corresponding root system is denoted  $\Phi$, with Weyl group $\mathcal{W}$. For $\alpha \in \Phi$,  $\mathfrak{g}_\alpha$  denotes  the corresponding root space.
The set of positive roots of $\Phi$ is denoted $\Phi^+$,   and $\Delta =\{ \alpha_1,...,\alpha_n\}\subseteq\Phi^+$ is a fixed base of $\Phi$. 
The {\it rank} of $\mathfrak{g}$ is $n$,  the number of simple roots in $\Delta$.

If $\alpha \in \Phi$, we may fix a nonzero $e_\alpha \in \mathfrak{g}_\alpha$. Then  there 
 is a unique $e_{-\alpha} \in \mathfrak{g}_{-\alpha}$, such that $e_\alpha$, $e_{-\alpha}$, and $h_{\alpha} =[e_\alpha, e_{-\alpha}] \in \mathfrak{h}$
 satisfy $[h_\alpha, e_\alpha]=\alpha(h_\alpha) e_\alpha=2 e_\alpha$, and $[h_\alpha, e_{-\alpha}]=-2 e_{-\alpha}$; thus 
 $e_\alpha$, $e_{-\alpha}$, and $h_\alpha$ span a subalgebra 
isomorphic to $\mathfrak{sl}_2$. 
We define $f_{\alpha} \coloneqq e_{-\alpha}$ for $\alpha \in \Phi^+$.

We may naturally associate $\mathfrak{h}$ with its dual space $\mathfrak{h}^*$ via the Killing form $\kappa$. Specifically,  $\alpha \in \mathfrak{h}^*$ 
corresponds to the unique element $t_\alpha \in \mathfrak{h}$ such that $\alpha(h) = \kappa(t_\alpha, h)$, for all $h \in \mathfrak{h}$. We have the nondegenerate symmetric bilinear form  on $\mathfrak{h}^*$ given by
$(\alpha, \beta) \coloneqq \kappa(t_{\alpha}, t_{\beta} )$, and we may define  $\langle \alpha, \beta \rangle \coloneqq \frac{2(\alpha, \beta)}{(\beta, \beta)} =\alpha (h_{\beta})$, where 
$h_{\beta} \coloneqq \frac{2t_\beta}{\kappa(t_\beta, t_\beta)}=\frac{2t_\beta}{(\beta, \beta)}$.

A root system $\Phi$ is {\it irreducible} if it cannot be partitioned into  two proper, orthogonal subsets. 
If $\Phi$ is irreducible, then $\mathfrak{g}$ is a simple Lie algebra.

The set of weights relative to the root system $\Phi$ is denoted $\Lambda$,  and $\Lambda^+$ is the set of all dominant weights with respect to $\Delta$. 
Let $\lambda_1,..., \lambda_n$ be the {\it fundamental dominant weights} (relative to $\Delta$).
A dominant  weight $\lambda \in \Lambda^+$ may be written as $\lambda = m_1 \lambda_1+ \cdots +m_n \lambda_n$, where $m_i$ is a nonnegative integer for each $i$. For each dominant weight
$\lambda$, $V(\lambda)$ is  the simple $\mathfrak{g}$-module of highest weight $\lambda$. Fix a highest weight vector $v_\lambda \in V(\lambda)$, unique up to scalar multiple.

For an arbitrary $\mathfrak{g}$-module $V$, let $\Pi(V)$ be the set of weights of $V$. Given $\mu \in \Pi(V)$, let $V_\mu =\{ v\in V ~|~ h \cdot v = \mu(h) v, ~ \text{for all}~ h\in \mathfrak{h} \}$. Then, $V$
decomposes into weight spaces
\begin{equation}
 V = \bigoplus_{\mu \in \Pi(V) } V_\mu.
\end{equation}

\subsection{Closed subsets of root systems and regular subalgebras}

A subset $T$ of the root system $\Phi$ is {\it closed} if for any
$x, y \in T$, $x+y \in \Phi$ implies $x+y \in T$. 
Any closed set $T$ is a disjoint union of its {\it symmetric} component $T^r =\{\alpha \in T | -\alpha \in T  \}$, 
and its {\it special} component $T^u=\{ \alpha \in T | -\alpha  \notin T \}$.  Let $S \subseteq \Phi$, then the {\it closure} of $S$, denoted $[S]$, is the smallest closed subset of $\Phi$ containing $S$. 

Let $T\subseteq \Phi$ be a closed subset. Let $\mathfrak{t}$ be a subspace of $\mathfrak{h}$
containing $[\mathfrak{g}_\alpha, \mathfrak{g}_{-\alpha}]$ for each $\alpha \in T^r$. Then
\begin{equation}\label{reggg}
\mathfrak{s}_{T,\mathfrak{t}} = \mathfrak{t} \oplus \bigoplus_{\alpha \in T} \mathfrak{g}_\alpha  
\end{equation}
is a regular subalgebra of $\mathfrak{g}$.  Moreover, all regular subalgebras of $\mathfrak{g}$ normalized by $\mathfrak{h}$ arise in this manner. 
Further, any regular subalgebra of $\mathfrak{g}$ is conjugate under the adjoint group  of $\mathfrak{g}$ to a regular subalgebra normalized by $\mathfrak{h}$. Hence, 
we'll assume that any regular subalgebra is normalized by $\mathfrak{h}$, and thus in the form of Eq. \eqref{reggg}.

A Lie algebra may be either semisimple, solvable, or Levi decomposable  (Levi's Theorem [\cite{levi}, Chapter II, Section $2$]). 
A regular semisimple subalgebra is given by $\mathfrak{s}_{T, \mathfrak{t}}$ for some   symmetric closed subset $T$ (non-empty)  of $\Phi$, and the subalgebra $\mathfrak{t}$ of $\mathfrak{h}$ generated by $[\mathfrak{g}_\alpha, \mathfrak{g}_{-\alpha}]$ for all $\alpha \in T\cap \Phi^+$.
A regular solvable subalgebra is given by a subalgebra  of the form $\mathfrak{s}_{T,\mathfrak{t}}$, where $T$ is a special closed subset of $\Phi$ (possibly empty), and $\mathfrak{t}$ a subalgebra of $\mathfrak{h}$. 
 A regular Levi decomposable subalgebra is a non-semisimple regular Lie algebra $\mathfrak{s}_{T, \mathfrak{t}}$, 
such that $T$ is a closed subset, $T^r$ is the corresponding (non-empty) symmetric closed subset,  $T^u$ is the corresponding  special closed subset of $\Phi$, and  $\mathfrak{t}$ is  a subalgebra of $\mathfrak{h}$ containing $[\mathfrak{g}_\alpha, \mathfrak{g}_{-\alpha} ]$ for each $\alpha \in T^r$.

Lemmas \ref{wideclosure}, and \ref{semireg}; and Proposition \ref{wprop} below are  relevant results for closed subsets of root systems and regular subalgebras.

\begin{lemma}\label{wideclosure}\cite{dr24}
Let $S$ be a closed subset of $\Phi$. Suppose $\beta_1$, $\beta_2$,...,$\beta_k \in S$, and $\beta_1+\beta_2+ \cdots+\beta_k \in \Phi$. Then $\beta_1+\beta_2+\cdots +\beta_k \in S$. 
\end{lemma}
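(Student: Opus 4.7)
The plan is to prove the lemma by induction on $k$, with the base case $k=1$ being trivial (since $\beta_1 \in S$ by hypothesis) and $k=2$ being exactly the definition of closedness. For the inductive step with $k\geq 3$, the strategy is to peel off a single summand $\beta_i$ so that the remaining sum still lies in $\Phi$, apply the induction hypothesis to recover $\alpha - \beta_i \in S$, and finally invoke pairwise closedness on $\beta_i$ and $\alpha - \beta_i$ to conclude $\alpha \in S$, where $\alpha \coloneqq \beta_1+\cdots+\beta_k$.

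The heart of the argument is therefore the sub-claim: there exists an index $i$ for which either $\alpha = \beta_i$ or $\alpha - \beta_i \in \Phi$. To produce such an $i$, I would use the bilinear form $(\cdot,\cdot)$ on $\mathfrak{h}^*$ introduced in the background section. Since $\alpha \in \Phi$ is nonzero, $(\alpha,\alpha)>0$, so the expansion
\begin{equation*}
(\alpha,\alpha) \;=\; \sum_{i=1}^{k} (\alpha, \beta_i)
\end{equation*}
forces $(\alpha, \beta_i) > 0$ for at least one index $i$. At this point I invoke the standard root-system fact that if $\alpha, \beta \in \Phi$ with $(\alpha,\beta)>0$, then $\alpha - \beta \in \Phi \cup \{0\}$ (an easy consequence of the $\beta$-string through $\alpha$, since positive inner product forces the string to extend in the appropriate direction).

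With the sub-claim in hand, the two cases close quickly. If $\alpha - \beta_i = 0$, then $\alpha = \beta_i \in S$ directly. Otherwise $\alpha - \beta_i = \sum_{j \neq i} \beta_j \in \Phi$, so the induction hypothesis applied to the $k-1$ roots $\{\beta_j : j \neq i\} \subseteq S$ yields $\alpha - \beta_i \in S$. Since $\beta_i \in S$ as well and their sum $\alpha$ lies in $\Phi$, the definition of closedness delivers $\alpha \in S$.

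I do not anticipate any serious obstacle: the only nontrivial ingredient is the classical fact $(\alpha,\beta)>0 \Rightarrow \alpha - \beta \in \Phi \cup \{0\}$, which is standard and available from the root-string theory summarized in the background. The mild subtlety worth being careful about is keeping the two possibilities $\alpha - \beta_i \in \Phi$ and $\alpha - \beta_i = 0$ genuinely separated, since the induction hypothesis requires a sum in $\Phi$ (not in $\Phi \cup \{0\}$), and handling the zero case by noting $\alpha = \beta_i \in S$ directly.
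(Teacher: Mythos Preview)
Your proof is correct. The induction on $k$ with the inner-product trick $(\alpha,\alpha)=\sum_i(\alpha,\beta_i)>0$ to locate an index with $(\alpha,\beta_i)>0$, followed by the standard consequence $\alpha-\beta_i\in\Phi\cup\{0\}$ (Humphreys, Lemma~9.4), is exactly the right mechanism, and you have handled the degenerate case $\alpha=\beta_i$ cleanly.

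Note, however, that the present paper does not actually prove this lemma: it is quoted from \cite{dr24} with no argument given here, so there is nothing in this paper to compare your approach against. Your argument is the standard one for this fact and would be entirely appropriate as a self-contained proof.
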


\begin{lemma}\label{semireg}\cite{dr24}
Let $T$ be a closed subset of $\Phi$. Then, $[T\cup -T]$ is a symmetric closed subset of $\Phi$ containing $T$. 
\end{lemma}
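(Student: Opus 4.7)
The plan is to prove the three parts of the statement in order: containment, closedness, and symmetry. The first two are essentially immediate from the definitions, so essentially all the content lies in the symmetry claim.

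First I would observe that $T \subseteq T \cup -T \subseteq [T \cup -T]$, which gives the containment. Likewise, $[T \cup -T]$ is closed by the very definition of closure (it is the smallest closed subset containing $T \cup -T$). So the only real work is proving symmetry: if $\alpha \in [T \cup -T]$, then $-\alpha \in [T \cup -T]$.

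For this, I would use Lemma \ref{wideclosure} as the bridge. A standard way to see it is to characterize $[S]$ as $\{\beta_1 + \cdots + \beta_k \in \Phi : \beta_i \in S\}$: one containment is Lemma \ref{wideclosure}, and the other follows because the set of such sums that lie in $\Phi$ is itself a closed set containing $S$. Using this characterization with $S = T \cup -T$, any $\alpha \in [T \cup -T]$ can be written as $\alpha = \beta_1 + \cdots + \beta_k$ with each $\beta_i \in T \cup -T$ and the sum lying in $\Phi$. Now the key observation is that $T \cup -T$ is visibly stable under negation, so each $-\beta_i$ again lies in $T \cup -T$. Since root systems are closed under negation, $-\alpha = (-\beta_1) + \cdots + (-\beta_k)$ is also an element of $\Phi$ written as a sum of elements of $T \cup -T$, hence $-\alpha \in [T \cup -T]$.

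There is no real obstacle here; the lemma is short because the closure operation interacts cleanly with the symmetric structure of $T \cup -T$. The one point to be careful about is making sure that the characterization of $[S]$ as the set of sums of elements of $S$ lying in $\Phi$ is justified from Lemma \ref{wideclosure}, rather than silently assumed; but this is routine, and once in hand the symmetry of $[T \cup -T]$ follows by just negating the summands.
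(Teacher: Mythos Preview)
Your argument is correct. The containment and closedness are immediate, and your route to symmetry via the explicit description $[S]=\{\beta_1+\cdots+\beta_k\in\Phi:\beta_i\in S\}$ is sound: the set of such sums is visibly closed (if two such sums add to a root, concatenate the lists of summands) and contains $S$, giving one inclusion, while Lemma~\ref{wideclosure} applied to the closed set $[S]\supseteq S$ gives the other. Negating summands then yields symmetry since $T\cup -T$ is stable under negation and $\Phi=-\Phi$.

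As for comparison: the paper does not supply its own proof of this lemma. It is quoted from \cite{dr24} and stated without argument, so there is nothing in the present paper to compare your approach against.
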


In the following proposition, $G$ is the adjoint group of $\mathfrak{g}$. 
The Weyl group $\mathcal{W}$ of $\Phi$ naturally acts on the dual space $\mathfrak{h}^*$. Further, by identifying  $\mathfrak{h}$ and $\mathfrak{h}^*$ via the Killing form, the Weyl group
also acts on $\mathfrak{h}$. 

\begin{proposition}\label{wprop}[ \cite{dougdeg}, Proposition 5.1]
The regular subalgebras $\mathfrak{s}_{T_1, \mathfrak{t}_1}$ and  $\mathfrak{s}_{T_2, \mathfrak{t}_2}$ are conjugate under $G$ if and only if there is a $w \in \mathcal{W}$ with 
$w(T_1)=w(T_2)$ and $w( \mathfrak{t}_1)=w( \mathfrak{t}_2)$.
\end{proposition}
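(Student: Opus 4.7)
The assertion is a biconditional, so I will split it into the two implications; the backward direction is essentially formal, while the forward direction is where the real content lies.

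For the backward direction, suppose $w \in \mathcal{W}$ satisfies $w(T_1)=T_2$ and $w(\mathfrak{t}_1)=\mathfrak{t}_2$. Using the identification $\mathcal{W} = N_G(\mathfrak{h})/Z_G(\mathfrak{h})$, I would lift $w$ to a representative $g_w \in N_G(\mathfrak{h})$, so that $\mathrm{Ad}(g_w)$ restricts to $w$ on $\mathfrak{h}$ and permutes root spaces by $\mathfrak{g}_\alpha \mapsto \mathfrak{g}_{w(\alpha)}$. Applying $\mathrm{Ad}(g_w)$ term-by-term to the decomposition \eqref{reggg} for $\mathfrak{s}_{T_1,\mathfrak{t}_1}$ produces $\mathfrak{s}_{T_2,\mathfrak{t}_2}$ on the nose.

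For the forward direction, suppose $g \in G$ with $\mathrm{Ad}(g)(\mathfrak{s}_{T_1,\mathfrak{t}_1}) = \mathfrak{s}_{T_2,\mathfrak{t}_2}$. Then $\mathrm{Ad}(g)(\mathfrak{h})$ normalizes $\mathfrak{s}_{T_2,\mathfrak{t}_2}$, and so does $\mathfrak{h}$ itself by assumption. Both are Cartan subalgebras of $\mathfrak{g}$, hence maximal toral subalgebras of the normalizer subalgebra $N_\mathfrak{g}(\mathfrak{s}_{T_2,\mathfrak{t}_2})$ (any toral subalgebra of $\mathfrak{g}$ containing a Cartan equals it). By the conjugacy of maximal tori in a connected algebraic group, applied to the identity component of the algebraic subgroup $N_G(\mathfrak{s}_{T_2,\mathfrak{t}_2})$, there exists $h \in N_G(\mathfrak{s}_{T_2,\mathfrak{t}_2})$ with $\mathrm{Ad}(h)(\mathrm{Ad}(g)(\mathfrak{h})) = \mathfrak{h}$. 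Set $g' \coloneqq hg$. Then $g' \in N_G(\mathfrak{h})$ so it represents some $w \in \mathcal{W}$, and
\[
\mathrm{Ad}(g')(\mathfrak{s}_{T_1,\mathfrak{t}_1}) = \mathrm{Ad}(h)(\mathfrak{s}_{T_2,\mathfrak{t}_2}) = \mathfrak{s}_{T_2,\mathfrak{t}_2}.
\]
Reading off the root-space and Cartan parts of both sides produces $w(T_1)=T_2$ and $w(\mathfrak{t}_1)=\mathfrak{t}_2$, as desired.

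\textbf{Main obstacle.} The nontrivial step is establishing the simultaneous conjugacy of the Cartans $\mathfrak{h}$ and $\mathrm{Ad}(g)(\mathfrak{h})$ \emph{inside} $N_G(\mathfrak{s}_{T_2,\mathfrak{t}_2})$, rather than merely inside $G$. The subtlety is that this normalizer need not be reductive — it typically has a nontrivial unipotent radical stemming from $T_2^u$. One must therefore verify that the two Cartans correspond to maximal tori of its identity component and then invoke Chevalley's conjugacy theorem; once this step is settled, the remainder of the argument is essentially bookkeeping with the root-space decomposition.
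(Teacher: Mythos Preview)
The paper does not supply its own proof of this proposition: it is quoted verbatim from \cite{dougdeg} (Proposition~5.1 there) as a background result, with no argument given, and in fact it is never invoked in the proofs of the main theorems of Sections~\ref{extremesection} and~\ref{nos}. So there is nothing in the present paper to compare your proposal against.

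That said, your sketch is the standard one and is essentially correct. The backward direction is indeed formal once one lifts $w$ to $N_G(\mathfrak h)$. For the forward direction, the key move---replacing $g$ by $hg$ with $h$ chosen inside $N_G(\mathfrak s_{T_2,\mathfrak t_2})$ so as to send $\mathrm{Ad}(g)(\mathfrak h)$ back to $\mathfrak h$---is exactly the right idea, and you have correctly isolated the only nontrivial point: one must argue that the two Cartans, being maximal toral in $\mathfrak g$ and contained in the normalizer, integrate to maximal tori of the identity component $N_G(\mathfrak s_{T_2,\mathfrak t_2})^\circ$, whereupon Chevalley's conjugacy theorem applies. Since tori are connected they automatically lie in the identity component, and maximality in $\mathfrak g$ forces maximality in any intermediate subgroup, so this step goes through. (Minor remark: the statement as printed has the evident typo $w(T_1)=w(T_2)$ for $w(T_1)=T_2$, which you have silently corrected.)
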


\subsection{Simple Lie algebras and Dynkin diagrams}

As mentioned previously, simple Lie algebras correspond to irreducible root systems. Semisimple Lie algebras are the direct sums of simple Lie algebras.

Given an irreducible root system $\Phi$ with base $\Delta=\{\alpha_1, \alpha_2,...,\alpha_n\}$, we associate a {\it Dynkin diagram}, which is a graph
having a vertex corresponding to each simple root. Between  vertices $\alpha_i$ and $\alpha_j$, we place 
$\langle\alpha_i, \alpha_j \rangle \langle\alpha_j, \alpha_i \rangle $ edges. It can be shown that between two vertices  there are either one, two, three, or no edges \cite{humphreys}.
Whenever a double or triple edge occurs, an arrow is added pointing to the shorter of the two roots.

The Dynkin diagrams for all simple Lie algebras are provided in Figure \ref{alldynkin}. Namely,  the figure contains the Dynkin diagrams of the simple Lie algebras of types
$A_n$, $B_n$, $C_n$, $D_n$,  $E_6$, $E_7$, $E_8$, $F_4$, and  $G_2$. Note that the subscripts of the Lie algebra types denote the rank of the simple Lie algebra.

\begin{figure}[h!]
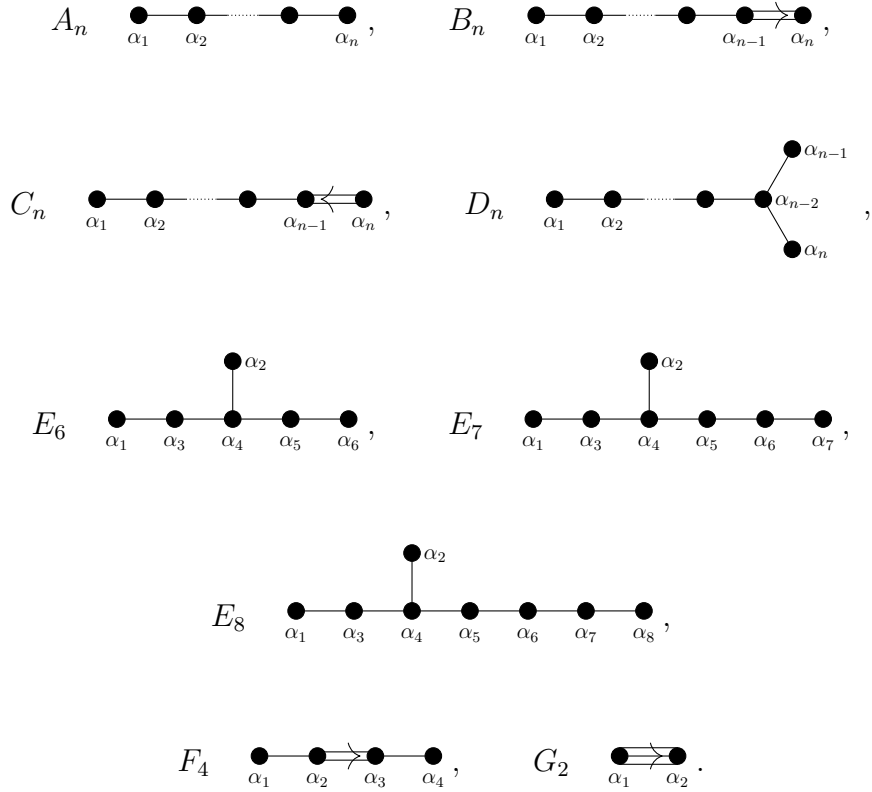

\(A_n  \quad \dynkin[scale=2.2, arrow width = 1.71mm, labels={ \alpha_1,\alpha_2, ,\alpha_{n} }]A{}\), \qquad \(B_n \quad   \dynkin[scale=2.2, arrow width = 1.71mm,  labels={\alpha_1, \alpha_2, , \alpha_{n-1},\alpha_n} ]B{}  \), \\
\vspace{10mm}
\(C_n  \quad \dynkin[scale=2.2, arrow width = 1.71mm, labels={\alpha_1, \alpha_2, , \alpha_{n-1},\alpha_n} ]C{}\), \qquad \(D_n \quad   \dynkin[scale=2.2, arrow width = 1.71mm, labels={\alpha_1, \alpha_2, ,  \alpha_{n-2},\alpha_{n-1}~,\alpha_n}, label directions={, , ,  right,,} ]D{}  \), \\
\vspace{10mm}
\(E_6  \quad \dynkin[scale=2.2, arrow width = 1.71mm, labels={\alpha_1, \alpha_2, \alpha_3,\alpha_4,\alpha_5,\alpha_6} ]E{6}\), \qquad \(E_7 \quad   \dynkin[scale=2.2, arrow width = 1.71mm, labels={\alpha_1, \alpha_2, \alpha_3,\alpha_4,\alpha_5,\alpha_6,\alpha_7}  ]E{7}  \), \\
\vspace{10mm}
\(E_8 \quad   \dynkin[scale=2.2, arrow width = 1.71mm, labels={\alpha_1, \alpha_2, \alpha_3,\alpha_4,\alpha_5,\alpha_6,\alpha_7, \alpha_8}  ]E{8}  \), \\
\vspace{12mm}
\(F_4 \quad   \dynkin[scale=2.2, arrow width = 1.71mm, labels={\alpha_1, \alpha_2, \alpha_3,\alpha_4}  ]F{4}  \), \qquad \(G_2  \quad \dynkin[scale=2.2, arrow width = 1.71mm, labels={\alpha_1, \alpha_2} ]G{2}\).
\caption{Dynkin diagrams of the simple Lie algebras.}\label{alldynkin}
\end{figure}

A {\it simple root path} in a Dynkin diagram of an irreducible root system is a sequence of distinct simple roots $( \beta_1, \beta_2,..., \beta_k)$
such that the vertices associated with the simple roots $\beta_i$ and $\beta_{i+1}$, for all $i$ with $1\leq i \leq k-1$, are connected by at least one edge.

Note that in a simple root path $( \beta_1, \beta_2,..., \beta_k)$, we have $\langle \beta_i, \beta_{i+1}\rangle <0$ for all $i$ with $1\leq i \leq k-1$
since, by definition, the simple roots $\beta_i$ and $\beta_{i+1}$ correspond to vertices in the Dynkin diagram connected by at least one edge, and the fact that 
$\langle \alpha, \gamma \rangle \leq0$ for all simple roots $\alpha$ and  $\gamma$ [\cite{humphreys}, Lemma $10.1$].
Note also that $\langle \beta_i, \beta_{j}\rangle =0$ for all $i$ and $j$ with $| i-j | >1$, since, in this case, the vertices corresponding to 
$\beta_i$ and $\beta_{j}$ are not connected by an edge. We illustrate a simple root path of $E_8$ in Figure \ref{rootpathe8}.

\begin{figure}[h!]
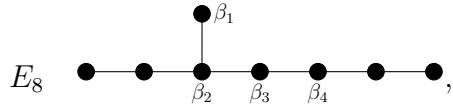

\(E_8 \quad   \dynkin[scale=2.2, arrow width = 1.71mm, labels={, \beta_1, ,\beta_2,\beta_3,\beta_4,, }  ]E{8}  \), \\
\caption{A simple root path $(\beta_1, \beta_2, \beta_3, \beta_4)$ of  $E_8$.}\label{rootpathe8}
\end{figure}

\begin{lemma}\label{catsmeow}
Let  $( \beta_1, \beta_2,..., \beta_k)$ be a simple root path  in a Dynkin diagram for an irreducible root system $\Phi$. 
Then, 
$\beta_i+\beta_{i+1}+\cdots+\beta_j \in \Phi$ for all $i$ and $j$ such that $1\leq i\leq j\leq k$.
\end{lemma}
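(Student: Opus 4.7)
The plan is to induct on the length $\ell = j - i + 1$ of the subpath, treating $i$ as fixed and letting $j$ increase. The base case $\ell = 1$ (i.e., $j = i$) is immediate since $\beta_i$ is itself a simple root, hence a root of $\Phi$.

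For the inductive step, assume $\gamma \coloneqq \beta_i + \beta_{i+1} + \cdots + \beta_{j-1} \in \Phi$, and aim to show that $\gamma + \beta_j \in \Phi$. The strategy is to compute the bilinear form
\[
(\gamma, \beta_j) \;=\; \sum_{l=i}^{j-1} (\beta_l, \beta_j),
\]
and exploit the two properties of a simple root path that were recorded in the paragraph immediately preceding the statement. Namely, for $l < j-1$ the vertices $\beta_l$ and $\beta_j$ are non-adjacent in the Dynkin diagram, so $(\beta_l, \beta_j) = 0$; and for $l = j-1$ the vertices are adjacent, so $\langle \beta_{j-1}, \beta_j\rangle < 0$, which (since $(\beta_j, \beta_j) > 0$) gives $(\beta_{j-1}, \beta_j) < 0$. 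Summing these contributions yields $(\gamma, \beta_j) < 0$.

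At this point I would invoke the standard root-system fact (Humphreys, Lemma~9.4): if $\alpha, \beta \in \Phi$ with $\alpha \neq -\beta$ and $(\alpha, \beta) < 0$, then $\alpha + \beta \in \Phi$. Applying this to $\alpha = \gamma$ and $\beta = \beta_j$, we conclude $\gamma + \beta_j \in \Phi$, completing the induction. The nondegeneracy requirement $\gamma \neq -\beta_j$ is automatic: $\gamma$ is a nonzero nonnegative integral combination of the simple roots $\beta_i,\ldots,\beta_{j-1}$, while $-\beta_j$ has the opposite sign, so $\gamma + \beta_j \neq 0$.

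There is no serious obstacle here; the whole argument reduces to the single bilinear computation, whose cleanness hinges entirely on the fact that in a simple root path all non-consecutive pairs are orthogonal, so the telescoping sum picks up exactly one (negative) term. The only minor vigilance needed is to check that $\gamma + \beta_j$ is not zero before invoking the $(\alpha,\beta) < 0$ lemma, which is handled by the positivity remark above.
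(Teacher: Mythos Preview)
Your proof is correct and follows essentially the same approach as the paper's: both argue by (informal or formal) induction on the length of the subpath, using orthogonality of non-consecutive $\beta_l,\beta_j$ and negativity of the adjacent pairing to reduce to Humphreys' Lemma~9.4. Your version is slightly more explicit in stating the induction and in verifying the side condition $\gamma \neq -\beta_j$, which the paper leaves implicit.
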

\begin{proof}
For $i$ and $j$ such that $1\leq i\leq j\leq k$, consider $\beta_i+\beta_{i+1}+\cdots+\beta_j$.  
We have $\langle \beta_p, \beta_q\rangle =0$ for $p$, and $q$ with $|p-q|>1$, as the vertices $\beta_p$ and $\beta_q$ are not connected by an edge.
Since $\beta_i$ and $\beta_{i+1}$ are connected by an edge for each $i$ with $1\leq i \leq k-1$, then $\langle \beta_i, \beta_{i+1}\rangle <0$.  This implies that $\beta_i+\beta_{i+1} \in \Phi$ [\cite{humphreys}, Lemma 9.4]. 

Next, $\langle \beta_i+\beta_{i+1}, \beta_{i+2} \rangle = \langle \beta_i, \beta_{i+2} \rangle+\langle \beta_{i+1}, \beta_{i+2} \rangle <0$. Once again this implies
that $\beta_i+\beta_{i+1}+\beta_{i+2}\in \Phi$. Continuing in this manner, we have $\beta_i+\beta_{i+1}+\cdots+\beta_j \in \Phi$, as required.
\end{proof}

\subsection{Previous results}

Finally, we record relevant results  from  \cite{dr24, panyu} that will be used in the present article. First, however, we need two additional definitions. The first definition is for a 
$\lambda$-wide subalgebra.  Suppose $V(\lambda)$ is the simple $\mathfrak{g}$-module of highest weight $\lambda$. A subalgebra  is  $\mathit{\lambda}${\it -wide} if the simple module  of $\mathfrak{g}$ of highest weight $\lambda$ remains indecomposable when restricted to the subalgebra.

Let $T$ be  a closed subset of $\Phi$. Then, define an $\mathfrak{s}_{\wc, \mathfrak{h}}$-submodule of $V(\lambda)$: 
\begin{equation}
\cw \cdot \lambda \coloneqq \text{Span}\big\{ e_{-\beta_1} \cdots e_{-\beta_m}\cdot v_\lambda ~|~ -\beta_j \in \cw \cap \Phi^-\big\},
\end{equation}
where $j=1,...,m$, and such that $-\beta_1,...,$ and $-\beta_m$  
 are not necessarily distinct.  Note that we include $v_\lambda \in \cw \cdot \lambda$.
In addition, note that if  $e_{-\beta_1} \cdots e_{-\beta_m}\cdot v_\lambda  \neq 0$, then $\lambda -  \beta_1-  \cdots - \beta_m \in \Pi(V(\lambda))$.   %

We are now ready to present the important results from \cite{dr24, panyu} that establish necessary and sufficient conditions for a regular subalgebra to be $\lambda$-wide, and wide, respectively.

\begin{theorem}\label{lwide}\cite{dr24}
Let $T$ be  a closed subset of $\Phi$, $\mathfrak{t}$  a subalgebra of $\mathfrak{h}$ containing $[\mathfrak{g}_\alpha, \mathfrak{g}_{-\alpha} ]$ for each $\alpha \in T^r$, and $\lambda \in \Lambda^+$. Then, $\mathfrak{s}_{T, \mathfrak{t}}$ is $\lambda$-wide if and only if
$\cw \cdot \lambda =  V(\lambda)$.   
\end{theorem}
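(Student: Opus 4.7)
The plan is to reduce the question to the simple module structure of $V(\lambda)$ under the enlarged regular subalgebra $\mathfrak{s}_{[T \cup -T], \mathfrak{h}}$. By Lemma \ref{semireg}, $[T \cup -T]$ is symmetric and closed, so $\mathfrak{s}_{[T \cup -T], \mathfrak{h}}$ is reductive and its finite-dimensional modules decompose completely. I would first establish that $N := [T \cup -T] \cdot \lambda$ is an $\mathfrak{s}_{[T \cup -T], \mathfrak{h}}$-submodule of $V(\lambda)$ (and hence an $\mathfrak{s}_{T, \mathfrak{t}}$-submodule since $\mathfrak{s}_{T, \mathfrak{t}} \subseteq \mathfrak{s}_{[T \cup -T], \mathfrak{h}}$). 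Invariance under $\mathfrak{h}$ and under negative root vectors $e_{-\beta}$ with $\beta \in [T \cup -T] \cap \Phi^+$ is immediate. For positive root vectors $e_\alpha$ with $\alpha \in [T \cup -T] \cap \Phi^+$, I induct on the length $m$ of the product $e_{-\beta_1} \cdots e_{-\beta_m} v_\lambda$, commuting $e_\alpha$ past each $e_{-\beta_i}$ using that the commutator lies in $\mathfrak{g}_{\alpha - \beta_i}$ (when $\alpha - \beta_i \in \Phi$) or in $\mathfrak{h}$, and that $\alpha - \beta_i \in [T \cup -T]$ whenever it is a root (by closedness together with Lemma \ref{wideclosure}). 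The base case uses $e_\alpha v_\lambda = 0$ since $v_\lambda$ is the $\mathfrak{g}$-highest weight vector. As $v_\lambda$ is likewise highest weight for $\mathfrak{s}_{[T \cup -T], \mathfrak{h}}$ relative to the positive system $[T \cup -T] \cap \Phi^+$, the submodule $N = \mathfrak{s}_{[T \cup -T], \mathfrak{h}} \cdot v_\lambda$ is simple.

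For the forward direction I argue the contrapositive. If $N \subsetneq V(\lambda)$, complete reducibility of $V(\lambda)$ over the reductive algebra $\mathfrak{s}_{[T \cup -T], \mathfrak{h}}$ yields a nonzero complement $V(\lambda) = N \oplus N'$, which restricted to $\mathfrak{s}_{T, \mathfrak{t}}$ remains a nontrivial direct sum decomposition; hence $\mathfrak{s}_{T, \mathfrak{t}}$ is not $\lambda$-wide.

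For the backward direction, assume $V(\lambda) = N$ and suppose $V(\lambda) = A \oplus B$ is a nontrivial $\mathfrak{s}_{T, \mathfrak{t}}$-decomposition; I seek a contradiction. My approach is via the socle: I aim to show that the socle (the sum of all simple $\mathfrak{s}_{T, \mathfrak{t}}$-submodules) of $V(\lambda)$ is itself simple, so that $\mathrm{soc}(A)$ and $\mathrm{soc}(B)$ cannot be two nonzero disjoint simple submodules of the socle. To pin down this unique simple submodule, I would study the joint kernel of $\{e_\alpha : \alpha \in T\}$ inside $\mathfrak{t}$-weight spaces of $V(\lambda)$: any simple $\mathfrak{s}_{T, \mathfrak{t}}$-submodule must contain a common $\mathfrak{t}$-eigenvector annihilated by the ``positive'' part $\{e_\alpha : \alpha \in T \cap \Phi^+\}$ and compatible with the reductive Levi factor $\mathfrak{s}_{T^r,\mathfrak{t}}$. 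Simplicity of $V(\lambda)$ as $\mathfrak{s}_{[T \cup -T], \mathfrak{h}}$-module should then force this candidate subspace to be one-dimensional, essentially identified with the line through the lowest weight vector of $V(\lambda)$ relative to $[T \cup -T] \cap \Phi^+$.

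The main obstacle is this backward step, specifically verifying simplicity of the socle. The delicate point is that $\mathfrak{s}_{T, \mathfrak{t}}$ combines a reductive Levi factor from $T^r$ with a nilpotent radical from $T^u$, and the simple constituents of the socle may have dimension greater than one, so the uniqueness argument must carefully combine $\mathfrak{t}$-weight analysis, common kernels of $T \cap \Phi^+$, and the highest-weight theory of the Levi as it acts inside $V(\lambda) = N$. Translating the hypothesis that $V(\lambda)$ is $\mathfrak{s}_{[T \cup -T], \mathfrak{h}}$-simple into this multi-layered uniqueness statement is the key technical step I expect to require the most care.
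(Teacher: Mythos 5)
Your forward half is sound, and in fact you prove somewhat more than you need there: the verification that $N=[T\cup -T]\cdot\lambda$ is stable under $\mathfrak{s}_{[T\cup -T],\mathfrak{h}}$ (induction on the length of the monomial, commutators landing in $\mathfrak{h}$ or in $\mathfrak{g}_{\alpha-\beta_i}$ with $\alpha-\beta_i\in[T\cup -T]$ by closedness), the observation that $N$ is a finite-dimensional highest weight module over a subalgebra that is reductive in $\mathfrak{g}$ and hence simple, and the contrapositive argument via complete reducibility giving a complement $N'$ and thus a proper $\mathfrak{s}_{T,\mathfrak{t}}$-decomposition when $N\subsetneq V(\lambda)$ are all correct. (Note the paper itself does not prove this theorem; it quotes it from \cite{dr24}, so there is no in-paper proof to compare against.)

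The genuine gap is the backward direction: you do not prove that $[T\cup -T]\cdot\lambda=V(\lambda)$ forces $\mathfrak{s}_{T,\mathfrak{t}}$ to be $\lambda$-wide, and this is the substantive half of the equivalence. The difficulty you would have to overcome is precisely the mismatch you gloss over: $N$ is generated from $v_\lambda$ by the lowering operators $e_{-\beta}$ with $\beta\in[T\cup -T]\cap\Phi^+$, most of which need not lie in $\mathfrak{s}_{T,\mathfrak{t}}$ at all (the subalgebra contains only $\mathfrak{t}\subseteq\mathfrak{h}$ and the root vectors for $\alpha\in T$), so the hypothesis $N=V(\lambda)$ cannot be read directly as a cyclicity or indecomposability statement over $\mathfrak{s}_{T,\mathfrak{t}}$; bridging that gap is exactly the content of the theorem. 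Your proposed route -- showing the socle of $V(\lambda)|_{\mathfrak{s}_{T,\mathfrak{t}}}$ is simple -- is strictly stronger than indecomposability (indecomposable modules over non-reductive algebras can have non-simple socle), and you give no argument that the hypothesis implies it; you explicitly flag this step as unresolved. Two smaller inaccuracies in the sketch: the nilpotent radical of $\mathfrak{s}_{T,\mathfrak{t}}$ is $\bigoplus_{\alpha\in T^u}\mathfrak{g}_\alpha$, and $T^u$ may contain negative roots, so the relevant annihilator condition for simple submodules is the vanishing of the $T^u$-root vectors (which act nilpotently and hence kill any simple submodule), not of $\{e_\alpha:\alpha\in T\cap\Phi^+\}$; and the ``lowest weight vector of $V(\lambda)$ relative to $[T\cup -T]\cap\Phi^+$'' need not span the socle over $\mathfrak{s}_{T,\mathfrak{t}}$, since the Levi factor $\mathfrak{s}_{T^r,\mathfrak{t}}$ can act on it through a simple constituent of dimension greater than one. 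As it stands, the proposal establishes only one implication of the stated equivalence.
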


\begin{corollary}\label{lwideb}\cite{dr24, panyu}
Let $T$ be  a closed subset of $\Phi$, and $\mathfrak{t}$  a subalgebra of $\mathfrak{h}$ containing $[\mathfrak{g}_\alpha, \mathfrak{g}_{-\alpha} ]$ for each $\alpha \in T^r$.  Then, $\mathfrak{s}_{T, \mathfrak{t}}$ is wide if and only if $[T \cup -T] =\Phi$. 
\end{corollary}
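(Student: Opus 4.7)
The plan is to reduce the corollary to Theorem \ref{lwide} by using the definition of wideness as ``$\lambda$-wide for every $\lambda \in \Lambda^+$'', and then to exploit Lemma \ref{wideclosure} together with the symmetry of $[T \cup -T]$ (Lemma \ref{semireg}). By Theorem \ref{lwide}, $\mathfrak{s}_{T,\mathfrak{t}}$ is wide precisely when $[T\cup -T]\cdot \lambda = V(\lambda)$ for every dominant weight $\lambda$, so the task becomes showing this holds for all $\lambda$ if and only if $[T\cup -T] = \Phi$.

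For the easy direction, I would assume $[T\cup -T]=\Phi$. Then $[T\cup -T]\cap \Phi^- = \Phi^-$, so the spanning set defining $[T\cup -T]\cdot \lambda$ contains all products $e_{-\beta_1}\cdots e_{-\beta_m}\cdot v_\lambda$ with $\beta_j \in \Phi^+$. Since $V(\lambda)$ is generated as a $\mathfrak{g}$-module by $v_\lambda$, and (by PBW applied to the triangular decomposition) it is in fact generated as a module over the negative nilpotent subalgebra by $v_\lambda$, these products span all of $V(\lambda)$. Hence $[T\cup -T]\cdot \lambda = V(\lambda)$ for every $\lambda$, giving wideness via Theorem \ref{lwide}.

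For the converse I would argue the contrapositive. Suppose $[T\cup -T] \subsetneq \Phi$, and pick $\beta \in \Phi \setminus [T\cup -T]$; by the symmetry of $[T\cup -T]$ given by Lemma \ref{semireg}, we may take $\beta \in \Phi^+$. Now choose $\lambda \in \Lambda^+$ large enough that $\lambda - \beta$ is still dominant, for instance $\lambda = N(\lambda_1+\cdots+\lambda_n)$ for sufficiently large $N$; then $\lambda - \beta \in \Lambda^+$ with $\lambda - \beta \leq \lambda$, so $\lambda - \beta \in \Pi(V(\lambda))$ and $V(\lambda)_{\lambda -\beta} \neq 0$. Any spanning vector $e_{-\beta_1}\cdots e_{-\beta_m}\cdot v_\lambda$ of $[T\cup -T]\cdot \lambda$ lying in $V(\lambda)_{\lambda -\beta}$ would require $\beta_1+\cdots+\beta_m = \beta$ with each $\beta_j \in [T\cup -T]\cap \Phi^+$; but then Lemma \ref{wideclosure} applied to the closed set $[T\cup -T]$ would force $\beta \in [T\cup -T]$, contradicting the choice of $\beta$. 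Therefore $[T\cup -T]\cdot \lambda$ misses the weight space $V(\lambda)_{\lambda - \beta}$, so by Theorem \ref{lwide} the subalgebra $\mathfrak{s}_{T,\mathfrak{t}}$ fails to be $\lambda$-wide, and in particular is not wide.

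The only delicate step is the choice of $\lambda$: we need $\lambda - \beta$ to actually be a weight of $V(\lambda)$, which is why I would make $\lambda - \beta$ dominant rather than merely asking $\lambda - \beta \leq \lambda$. Everything else follows from Theorem \ref{lwide}, Lemma \ref{wideclosure}, and Lemma \ref{semireg}, with no further combinatorial input required.
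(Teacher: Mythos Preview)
Your argument is correct. Note, however, that the paper does not actually prove Corollary~\ref{lwideb}; it is quoted as a result from \cite{dr24, panyu}, so there is no ``paper's own proof'' to compare against. What you have written is the natural deduction of the corollary from Theorem~\ref{lwide}, using Lemmas~\ref{wideclosure} and~\ref{semireg} exactly as one would expect; the choice of a sufficiently regular $\lambda$ to guarantee $\lambda-\beta\in\Pi(V(\lambda))$ is the only point requiring care, and you have handled it correctly.
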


Note that necessary and sufficient conditions for essentially all regular solvable subalgebras to be wide was established in \cite{panyu}. The result was extended
to all regular subalgebras in \cite{dr24}.

\section{Regular extreme simple Lie algebras}\label{extremesection}

In this section, we prove that all simple Lie algebras are regular extreme. We begin with a lemma pertaining to simple root paths and
elements of simple modules.

\begin{lemma}\label{stringl}
Let $(\beta_1,...,\beta_k)$ be a simple root path of a Dynkin diagram for an irreducible root system, with associated simple Lie algebra $\ssimple$. Further,
let $V(\lambda)$ be the simple $\ssimple$-module of highest weight $\lambda$, and highest weight vector $v_\lambda$.
If $f_{\beta_1} v_\lambda \neq 0$, then $f_{\beta_1+\cdots +\beta_k} v_\lambda \neq 0$.
\end{lemma}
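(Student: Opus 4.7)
The plan is to proceed by induction on the length $k$ of the simple root path, setting $\nu_j := \beta_1 + \cdots + \beta_j$ for brevity. The base case $k=1$ is immediate, since $\nu_1 = \beta_1$ and $f_{\beta_1} v_\lambda \neq 0$ is precisely the hypothesis. For the inductive step, I would assume $f_{\nu_{k-1}} v_\lambda \neq 0$ and aim to deduce $f_{\nu_k} v_\lambda \neq 0$.

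An obvious first instinct is to write $f_{\nu_k}$ as a nonzero multiple of the commutator $[f_{\beta_k}, f_{\nu_{k-1}}]$ (valid because $\beta_k$, $\nu_{k-1}$, and their sum $\nu_k$ are all roots by Lemma \ref{catsmeow}) and then apply this to $v_\lambda$. Expanding produces $f_{\beta_k} f_{\nu_{k-1}} v_\lambda - f_{\nu_{k-1}} f_{\beta_k} v_\lambda$, and ruling out cancellation between these two terms of equal weight seems to require a case split based on whether $f_{\beta_k} v_\lambda$ vanishes. This is the awkward path I would avoid.

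Instead, my strategy is to apply the raising operator $e_{\beta_k}$ to $f_{\nu_k} v_\lambda$ and extract the information from a single bracket. Because $\beta_k + (-\nu_k) = -\nu_{k-1}$ is a root, and because $[\mathfrak{g}_\alpha, \mathfrak{g}_\beta] = \mathfrak{g}_{\alpha + \beta}$ whenever $\alpha, \beta, \alpha + \beta \in \Phi$ (a consequence of the $\alpha$-string $\mathfrak{sl}_2$-analysis through $\beta$), we have $[e_{\beta_k}, f_{\nu_k}] = c f_{\nu_{k-1}}$ for some nonzero scalar $c$. Since $\beta_k$ is positive, $e_{\beta_k}$ annihilates the highest weight vector, so
\[ e_{\beta_k}\, f_{\nu_k} v_\lambda \;=\; [e_{\beta_k}, f_{\nu_k}]\, v_\lambda + f_{\nu_k}\, e_{\beta_k} v_\lambda \;=\; c\, f_{\nu_{k-1}} v_\lambda. \]
By the inductive hypothesis the right-hand side is nonzero, which forces $f_{\nu_k} v_\lambda \neq 0$ and completes the induction.

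The main obstacle is conceptual rather than computational: one has to resist the lowering-operator commutator expansion and instead pull information out of $f_{\nu_k} v_\lambda$ by raising. The only technical point is verifying that $c$ is genuinely nonzero, but this is standard. Once the raising direction is chosen, the entire inductive step collapses to a single bracket identity together with the vanishing of $e_{\beta_k} v_\lambda$.
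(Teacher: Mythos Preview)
Your proof is correct, but it takes a genuinely different route from the paper's. The paper does not induct: it observes from $\mathfrak{sl}_2$-theory that $f_{\beta_1}v_\lambda\neq 0$ forces $\langle\lambda,\beta_1\rangle>0$, then argues by contradiction that if $f_{\beta_1+\cdots+\beta_k}v_\lambda=0$ (together with $e_{\beta_1+\cdots+\beta_k}v_\lambda=0$) one would get $\langle\lambda,\beta_1+\cdots+\beta_k\rangle=0$, hence $(\lambda,\beta_2+\cdots+\beta_k)<0$; since $\beta_2+\cdots+\beta_k$ is a positive root by Lemma~\ref{catsmeow}, this contradicts the dominance of $\lambda$. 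So the paper's argument is a single-shot pairing computation that leans on dominance, whereas yours is an inductive bracket computation that leans on the nonvanishing of structure constants $N_{\alpha,\beta}$ when $\alpha+\beta\in\Phi$. Your approach has the virtue of producing an explicit witness to nonvanishing (namely $e_{\beta_k}f_{\nu_k}v_\lambda\neq 0$) and never touches the inner product; the paper's approach is shorter, avoids induction, and makes the role of $\lambda$ being a highest weight more transparent. Both are clean; the only point worth flagging in your write-up is that the nonvanishing of $c$ is indeed standard (e.g., Humphreys, Proposition~8.4), so your parenthetical justification is adequate.
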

\begin{proof}
Since $f_{\beta_1} v_\lambda \neq 0$, and necessarily $e_{\beta_1} v_\lambda = 0$, we have that $\langle \lambda, \beta_1 \rangle > 0$ by 
$\mathfrak{sl}_2$-theory. By way of contradiction, suppose that $f_{\beta_1+\cdots+\beta_k} v_\lambda = 0$. Since, again, we necessarily 
have $e_{\beta_1+\cdots+\beta_k} v_\lambda = 0$, then $\langle \lambda, \beta_1+\cdots+\beta_k\rangle =0$. This implies $( \lambda, \beta_1+\cdots+\beta_k ) =0$, and
\begin{equation}
( \lambda, \beta_1 )= -( \lambda, \beta_{2}+\cdots+\beta_k ).
\end{equation}
Note that $\beta_{2}+\cdots+\beta_k \in \Phi$ by Lemma \ref{catsmeow}. And since $\langle \lambda, \beta_1 \rangle > 0$, we have
\begin{equation}\label{ghrtt}
\langle \lambda, \beta_2+\cdots+\beta_k \rangle <0,
\end{equation}
which is impossible since $\lambda$ is the highest weight. Hence, it must be the case that $f_{\beta_1+\cdots+\beta_k} v_\lambda \neq 0$, as required.
\end{proof}

\begin{theorem}
The simple Lie algebras are regular extreme.
\end{theorem}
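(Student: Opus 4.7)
The plan is to show, for a simple Lie algebra $\mathfrak{g}$ and a regular subalgebra $\mathfrak{s}_{T,\mathfrak{t}}$, that ``not wide'' implies ``narrow''. Writing $S = [T \cup -T]$, Corollary \ref{lwideb} translates ``not wide'' into $S \subsetneq \Phi$, while Theorem \ref{lwide} reduces ``narrow'' to the statement that $S\cdot\lambda \neq V(\lambda)$ for every nontrivial dominant weight $\lambda$. Since every weight appearing in $S\cdot\lambda$ lies in $\lambda - \mathbb{Z}_{\geq 0}(S^+)$ with $S^+ := S\cap\Phi^+$, my goal for each such $\lambda$ is to exhibit a weight $\mu \in \Pi(V(\lambda))$ with $\lambda - \mu \notin \mathbb{Z}_{\geq 0}(S^+)$. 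A preliminary observation: some simple root $\alpha_i$ must lie outside $S$, for otherwise iterated applications of Lemma \ref{wideclosure} together with symmetry would force $S = \Phi$.

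Writing $\lambda = \sum_j m_j \lambda_j$, I split into two cases. \emph{Case A:} some $\alpha_i \notin S$ has $m_i > 0$. Then $\mu = \lambda - \alpha_i$ is a weight of $V(\lambda)$ by $\mathfrak{sl}_2$-theory, and a short simple-root coefficient comparison shows that in any decomposition $\alpha_i = \sum n_\beta \beta$ with $\beta \in S^+$, every contributing $\beta$ must equal $\alpha_i$ itself, which is excluded. \emph{Case B:} $m_i = 0$ for every $\alpha_i \notin S$. Then the support of $\lambda$ lies inside $J := \{j : \alpha_j \in S\}$. Let $C$ be the connected component in the subdiagram induced by $J$ containing some $\alpha_j$ with $m_j > 0$. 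Since $C \neq \Delta$ (else $S = \Phi$) and the Dynkin diagram of $\mathfrak{g}$ is connected, one obtains an ``exit edge'': a vertex $\gamma_r \in C$ adjacent in the Dynkin diagram to some $\alpha_q \notin S$. Following the simple root path $(\alpha_j = \gamma_1, \gamma_2, \ldots, \gamma_r, \gamma_{r+1} = \alpha_q)$ and applying Lemma \ref{stringl} yields the candidate weight $\mu = \lambda - \sigma$ with $\sigma = \gamma_1 + \cdots + \gamma_{r+1}$.

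The main obstacle is verifying that $\sigma \notin \mathbb{Z}_{\geq 0}(S^+)$ in Case B. Assuming a decomposition $\sigma = \sum_\beta n_\beta \beta$ with $\beta \in S^+$, matching simple-root coefficients on $P := \{\gamma_1, \ldots, \gamma_{r+1}\}$ forces each contributing $\beta$ to be supported in $P$ and to have every $P$-coefficient at most $1$. The sub-root-system $\Phi_P$ is linear, hence of type $A$, $B$, $C$, $F_4$, or $G_2$, and a direct inspection in each type shows that its positive roots with all $P$-coefficients $\leq 1$ are precisely the ``intervals'' $\gamma_p + \cdots + \gamma_q$; this uniformity rests on the general fact that every simple Dynkin diagram is a tree carrying at most one multiple edge. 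The decomposition therefore encodes a partition of $\{1, \ldots, r+1\}$ into intervals whose sums all lie in $S^+$. Focusing on the interval $[p^\ast, r+1]$ containing the endpoint: if $p^\ast = r+1$, the singleton interval puts $\alpha_q \in S$, contradicting our choice of $\alpha_q$; if $p^\ast < r+1$, then $\gamma_{p^\ast} + \cdots + \gamma_r$ is a root by Lemma \ref{catsmeow} and lies in $S$ by Lemma \ref{wideclosure}, so subtracting it from the assumed $\gamma_{p^\ast} + \cdots + \alpha_q \in S$ drags $\alpha_q$ into $S$ via closedness --- a contradiction in either subcase. The approach is uniform across $B_n, C_n, D_n, E_6, E_7, E_8, F_4, G_2$ (and recovers the $A_n$ case as well), consistent with the paper's stated aim of avoiding any appeal to specific bases of the simple modules.
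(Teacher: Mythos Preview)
Your argument is correct and mirrors the paper's: reduce via Corollary~\ref{lwideb} and Theorem~\ref{lwide}, then for each nonzero $\lambda$ walk along a simple root path from a simple root in the support of $\lambda$ to one outside $S=[T\cup-T]$, invoke Lemma~\ref{stringl}, and show the resulting weight is missing from $\Pi(S\cdot\lambda)$. The one place you diverge is the Case~B contradiction, where the paper bypasses your interval decomposition entirely by writing $\beta_k=\gamma_1+\cdots+\gamma_p+(-\beta_1)+\cdots+(-\beta_{k-1})$ and applying Lemma~\ref{wideclosure} in a single stroke, using only that $S$ is symmetric (so each $-\beta_i\in S$); your interval machinery reaches the same contradiction but by a longer route.
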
 
\begin{proof}
Let $\stt$ be a regular subalgebra of the simple Lie algebra $\ssimple$. Suppose that $\stt$ is not wide. We must show that $\stt$ is narrow.
That is, given an arbitrary simple  $\ssimple$-module $V(\lambda)$ with $\lambda \neq 0$,  we'll
show that $V(\lambda)$ has a non-trivial $\stt$-decomposition.
Since $\stt$ is not wide, $[T \cup -T] \subsetneq \Phi$ by Corollary \ref{lwideb}. This implies that
$\Delta \setminus [T \cup -T]$ is not empty.  

Let $\lambda =m_1\lambda_1+\cdots + m_l \lambda_l+ \cdots + m_n \lambda_n \neq 0$ ($l$ fixed and $1\leq l \leq n$) be a dominant weight, with $m_l>0$. Then, $f_{\alpha_l} v_\lambda \neq 0$.
We proceed in cases to show that $V(\lambda)$ has a non-trivial decomposition with respect to $\stt$.

\vspace{2mm}

\noindent Case $1$. $\alpha_l \notin \cw$:  Since $f_{\alpha_l} v_\lambda \neq 0$, then
 $\lambda-\alpha_l\in \Pi(V(\lambda))$. We claim that $\lambda-\alpha_l \notin \Pi(\cw \cdot \lambda)$. By way 
of contradiction, suppose that $\lambda-\alpha_l \in \Pi(\cw \cdot \lambda)$. 
Then 
$\lambda-\alpha_l  = \lambda -\gamma_1-\cdots -\gamma_p$ for some  $-\gamma_1$,...,$-\gamma_p \in \cw \cap \Phi^-$. Therefore
$\alpha_l =\gamma_1+\cdots +\gamma_p \in \Phi$, which is thus an element of $\cw$ by Lemma \ref{wideclosure}, a contradiction. 

Hence, it must be the case that $\lambda-\alpha_l \notin \Pi(\cw \cdot \lambda)$. Since $\lambda-\alpha_l \notin \Pi(\cw \cdot \lambda)$ and
$\lambda-\alpha_l \in \Pi(V(\lambda))$, Theorem \ref{lwide} implies that $V(\lambda)$ has a non-trivial $\stt$-decomposition.

\vspace{2mm}

\noindent Case $2$. $\alpha_l \in \cw$: Then, there exists a simple root path $(\beta_1,...,\beta_k)$ such that $\alpha_l=\beta_1$, and
such that $\beta_i \in \cw$ if $ 1\leq i \leq k-1$, and $\beta_k \in \Delta \setminus \cw$. Such a simple root path exists
since $\Delta \setminus [T \cup -T]$ is not empty, and any two vertices in Dynkin diagram of an irreducible root system may be joined by such 
a path. Hence, by Lemma \ref{stringl},
\begin{equation}
\begin{array}{llllllll}
f_{\beta_1+\cdots+\beta_k} v_\lambda \neq 0,
\end{array}
\end{equation}
since $f_{\beta_1}  v_\lambda=f_{\alpha_l} v_\lambda \neq 0$.
This implies that $\lambda-\beta_1 -\cdots-\beta_{k-1}-\beta_k \in \Pi(V(\lambda))$. We claim that $\lambda-\beta_1 -\cdots-\beta_{k-1}-\beta_k \notin \Pi(\cw \cdot \lambda)$. By way 
of contradiction, suppose that $\lambda-\beta_1 -\cdots-\beta_{k-1}-\beta_k  \in \Pi(\cw \cdot \lambda)$. 

Then,  
$\lambda-\beta_1 -\cdots-\beta_{k-1}-\beta_k  = \lambda -\gamma_1-\cdots -\gamma_p$ for some  $-\gamma_1$,...,$-\gamma_p \in \cw\cap \Phi^-$. Hence
$\beta_k =\gamma_1+\cdots +\gamma_p-\beta_{1}-\cdots-\beta_{k-1}  \in \Phi$, which is thus an element of $\cw$ by Lemma \ref{wideclosure} (note that
$\gamma_1,..., \gamma_p, -\beta_{1},..,-\beta_{k-1}  \in \cw$), a contradiction. 

Hence, it must be the case that $\lambda-\beta_1 -\cdots-\beta_{k-1}-\beta_k \notin \Pi(\cw \cdot \lambda)$. Since $\lambda-\beta_1 -\cdots-\beta_{k-1}-\beta_k  \notin \Pi(\cw \cdot \lambda)$ and
$\lambda-\beta_1 -\cdots-\beta_{k-1}-\beta_k  \in \Pi(V(\lambda))$, Theorem \ref{lwide} implies that $V(\lambda)$ has a non-trivial $\stt$-decomposition.

With cases $1$ and $2$, we've established that if $\stt$ is not wide, then $\stt$ is narrow, as required.
\end{proof}

\section{Non-simple, semisimple Lie algebras}\label{nos}

In the final section, we show that no non-simple, semisimple Lie algebra is  regular extreme.

\begin{theorem}
No non-simple, semisimple Lie algebra is regular extreme.
\end{theorem}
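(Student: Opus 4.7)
The plan is to produce, for any non-simple semisimple Lie algebra $\mathfrak{g}$, a single explicit regular subalgebra that is neither wide nor narrow; by definition this suffices to show $\mathfrak{g}$ is not regular extreme.

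First I would decompose $\mathfrak{g} = \mathfrak{g}_1 \oplus \cdots \oplus \mathfrak{g}_r$ into its simple ideals (with $r \geq 2$), which induces the compatible decompositions $\Phi = \Phi_1 \sqcup \cdots \sqcup \Phi_r$, $\mathfrak{h} = \mathfrak{h}_1 \oplus \cdots \oplus \mathfrak{h}_r$, and $\Delta = \Delta_1 \sqcup \cdots \sqcup \Delta_r$, with $(\Phi_i, \Phi_j) = 0$ for $i \neq j$. Then I would take $T = \Phi_1$, which is a symmetric closed subset of $\Phi$, together with $\mathfrak{t} = \mathfrak{h}_1$. The resulting regular subalgebra $\stt$ is exactly the simple ideal $\mathfrak{g}_1$ sitting inside $\mathfrak{g}$, and the whole argument reduces to showing $\mathfrak{g}_1$ is neither wide nor narrow as a subalgebra of $\mathfrak{g}$.

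That $\stt = \mathfrak{g}_1$ is not wide is immediate from Corollary \ref{lwideb}, since $[T \cup -T] = \Phi_1 \subsetneq \Phi$ when $r \geq 2$. To show that $\stt$ is not narrow, I would pick any fundamental dominant weight $\lambda_i$ associated with a simple root $\alpha_i \in \Delta_1$; then $\lambda_i \neq 0$, yet $\lambda_i$ vanishes on $\mathfrak{h}_j$ for every $j \geq 2$. In the simple module $V(\lambda_i)$, every weight differs from $\lambda_i$ by a nonnegative integer combination of simple roots in $\Delta_1$ alone, so the factors $\mathfrak{g}_2, \ldots, \mathfrak{g}_r$ act trivially on $v_{\lambda_i}$, and $V(\lambda_i)$ is generated from $v_{\lambda_i}$ by $\mathfrak{g}_1$ alone. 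Consequently the restriction $V(\lambda_i)|_{\mathfrak{g}_1}$ is precisely the simple $\mathfrak{g}_1$-module of highest weight $\lambda_i$, which is indecomposable. Thus a non-trivial simple $\mathfrak{g}$-module restricts to $\stt$ without proper decomposition, so $\stt$ is not narrow, completing the argument.

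I do not anticipate a serious obstacle here; the argument is almost entirely structural. The one point warranting a careful sentence or two is the identification of $V(\lambda_i)|_{\mathfrak{g}_1}$ with the simple $\mathfrak{g}_1$-module of highest weight $\lambda_i$, which rests on the orthogonality of the root subsystems $\Phi_1$ and $\Phi_j$ (for $j \geq 2$) and on the fact that the highest-weight vector $v_{\lambda_i}$ generates $V(\lambda_i)$ under the action of $\mathfrak{g}$ as a whole. Once that is in place, the statement follows with no further case analysis.
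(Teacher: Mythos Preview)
Your proposal is correct and follows essentially the same approach as the paper: both single out the simple ideal $\mathfrak{g}_1$ as the regular subalgebra and exhibit a simple $\mathfrak{g}$-module concentrated on the first factor to show it is not narrow. The only minor difference is that for ``not wide'' you invoke Corollary~\ref{lwideb} via $[T\cup -T]=\Phi_1\subsetneq\Phi$, whereas the paper instead writes down an explicit simple module concentrated on $\mathfrak{g}_2$ and displays its $\mathfrak{g}_1$-decomposition.
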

\begin{proof}
Let $\mathfrak{g} = \oplus_{i=1}^k \mathfrak{g}^i$ be a non-simple, semisimple Lie algebras, with 
$\mathfrak{g}^1$, $\mathfrak{g}^2$,..., and $\mathfrak{g}^k$ simple Lie algebras, and $k>1$.  To show that $\mathfrak{g}$ is not regular extreme,
it suffices to identify a regular subalgebra of $\mathfrak{g}$  which is neither narrow nor wide.
The regular subalgebra of $\mathfrak{g}$ that we'll  show is neither narrow nor wide is the regular simple subalgebra $\mathfrak{g}^1 \subset \mathfrak{g}$. 

Let $V^1(\lambda)$ be the simple $\mathfrak{g}^1$-module of highest weight $\lambda \neq 0$. Let $V^i(0)$ be the
trivial $\mathfrak{g}^i$-module for $i=1,...,k$. Then
\begin{equation}
V \coloneqq V^1(\lambda) \otimes V^2(0) \otimes \cdots \otimes V^k(0)
\end{equation}
is a non-trivial simple $\mathfrak{g}$-module. Since $V|_{\mathfrak{g}^1} \cong V^1(\lambda)$, then the simple $\mathfrak{g}$-module
$V$ is $\mathfrak{g}^1$-indecomposable. Hence, the regular simple subalgebra $\mathfrak{g}^1$ is not narrow.

We now show that $\mathfrak{g}^1$ is also not wide. Towards this end, let $V^2(\eta)$ be the simple $\mathfrak{g}^2$-module
with highest weight $\eta \neq 0$. Then
\begin{equation}
W \coloneqq V^1(0) \otimes V^2(\eta) \otimes V^3(0) \otimes \cdots \otimes V^k(0)
\end{equation}
is a non-trivial simple $\mathfrak{g}$-module. However, $W$  has a non-trivial $\mathfrak{g}^1$-decomposition. In particular,
let $u^i$ be the single basis element of $V^i(0)$, and $u^2_1,...,u^2_m$ be a basis 
of $V^2(\eta)$, so that $m >1$. Then, the following is a non-trivial $\mathfrak{g}^1$-decomposition of $W$:
\begin{equation}
W|_{\mathfrak{g}^1} = \oplus_{i=1}^m \mathbb{C} \{ u^1 \otimes u^2_i \otimes u^3 \otimes \cdots \otimes u^k \}.
\end{equation}
Hence, $\mathfrak{g}^1$ is also not wide, as required. 
\end{proof}

\end{document}